\pgfplotsset{compat=1.15}
\long\def\unmarkedfootnote#1{{\long\def\@makefntext##1{##1}\footnotetext{#1}}}
\theoremstyle{plain}
\newtheorem{thm}{Theorem}[section]
\newtheorem{lemma}[thm]{Lemma}
\newtoks\prt
\newtheorem{proclaim}[thm]{\the\prt}
\theoremstyle{definition}
\def\eqn#1$$#2$${\begin{equation}\label#1#2\end{equation}}
\numberwithin{equation}{section}
\def\epsilon{\varepsilon}
\def\er{\mathbb R}
\def\mir1{\mathcal L_1}
\def\oint{-\hskip -11pt \int}
\def\Q{\tilde{Q}}
\def\Q{\widetilde{Q}}
\def\rn{\mathbb R^n}
\def\haus{\mathcal{H}}
\newcommand{\RR}{\mathbb{R}}
\newcommand{\NN}{\mathbb{N}}
\newcommand{\I}{\int_}
\newcommand{\Om}{\Omega}
\newtoks\by
\newtoks\paper
\newtoks\book
\newtoks\jour
\newtoks\yr
\newtoks\pages
\newtoks\vol
\newtoks\publ
\def\ota{{\hbox\vol{???}}}
\def\cLear{\by=\ota\paper=\ota\book=\ota\jour=\ota\yr=\ota
\pages=\ota\vol=\ota\publ=\ota}
\def\endpaper{\the\by, {\the\paper},
\textit{\the\jour} \textbf{\the\vol} (\the\yr), \the\pages.\cLear}
\def\endbook{\the\by, \textit{\the\book}, \the\publ.\cLear}
\def\endprep{\the\by, \textit{\the\paper}, \the\jour.\cLear}
\def\endyearprep{\the\by, \textit{\the\paper}, \the\jour, (\the\yr).\cLear}
\def\name#1#2{#2 #1}
\def\nom{ \rm no. }
\title{Note on injectivity in second-gradient Nonlinear Elasticity}
\author{Stanislav Hencl}
\address{Department of Mathematical Analysis, Charles University,
	So\-ko\-lovsk\'a 83, 186~00 Prague 8, Czech Republic}
\email{hencl@karlin.mff.cuni.cz}
\author{Kaushik Mohanta}
\address{Department of Mathematical Analysis, Charles University,
	So\-ko\-lovsk\'a 83, 186~00 Prague 8, Czech Republic}
\email{kaushik.mohanta@matfyz.cuni.cz}
\date{\today}
\thanks{The authors were supported by the grant GA\v{C}R P201/24-10505S}
\begin{document}

\begin{abstract}
    {Let $q>1$, $(1-\frac{1}{q})a\geq 1$ and let $\Omega\subset \mathbb{R}^2$ be Lipschitz domain. We show that planar mappings in the second order Sobolev space $f\in W^{2,q}(\Omega,\mathbb{R}^2)$ with $|J_f|^{-a}\in L^1(\Omega)$ are homeomorphism if they agree with a homeomorphism on the boundary. The condition $(1-\frac{1}{q})a\geq 1$ is sharp. We also have a new sharp result about the $\mathcal{H}^{n-1}$ measure of the projection of the set $\{J_f=0\}$ in $\mathbb{R}^n$.}
\end{abstract}

\maketitle

\section{Introduction}

We study models of Nonlinear Elasticity that involve the second gradient, and in particular, we would like to study injectivity of such mappings which corresponds to the physical assumption of ``noninterpenetration of the matter''. 
	Let $\Omega\subset\rn$ be a domain and let $f:\Omega\to\rn$. 
In this paper we study mappings with finite energy (with some proper convex functions $W$ and $\Psi$)
$$
E(f):=
\int_{\Omega} \bigl(W(Df(x))+\Psi(D^2 f(x))\bigr)\; dx
$$
such that there are $q\geq 1$ and $a>0$ so that  
\eqn{assume}
$$
W(Df)\geq \frac{1}{|J_f|^a}\text{ and }\Psi(D^2 f)\geq |D^2f|^q, 
$$
i.e. $f\in W^{2,q}$ and $J_f^{-a}\in L^1$. Moreover, we assume that the mapping $f$ is equal to a given homeomorphism $f_0$ on $\partial \Omega$ so we prescribe our deformation there. 
	 Models with the second gradient were introduced by Toupin \cite{T}, \cite{T2}
	and later considered by many other authors, see e.g. Ball, Curie, Olver \cite{BCO}, Ball, Mora-Corral \cite{BMC}, M\"uller \cite[Section 6]{Mbook}, Ciarlet \cite[page 93]{Ci} and references given there 
	(for references to some newer results see e.g. introduction in \cite{CHMS}). 
	The contribution of the higher gradient is usually connected with interfacial energies and is used to model various phenomena like elastoplasticity or damage and the term $1/|J_f|^a$ corresponds to the penalisation of compression.

Our main result is the following. 

\prt{Theorem}
\begin{proclaim}
\label{main}
Let $n=2$, $q> 1$ and $(1-\frac{1}{q})a\geq 1$. 
Let $\Omega\subset\er^2$ be a Lipschitz domain and let $f_0:\overline{\Omega}\to\er^2$ be a given homeomorphism. 
Assume that $f\in W^{2,q}(\Omega,\er^2)$ is a mapping such that $|J_f|^{-a}\in L^1(\Omega)$ and $f=f_0$ on $\partial \Omega$ in the sense of traces. 
Then $f$ is a homeomorphism in $\Omega$. 
\end{proclaim}

The above theorem was originaly shown in \cite{HK} only for $q>2$ under the very restrictive assumption $(1-\frac{n}{q})a\geq n$. Then it was generalized also to $q>4/3$ in \cite{CHMS} under the condition $(\frac{3}{2}-\frac{2}{q})a\geq 1$ which is better than \cite{HK}. We are able to show the result in full generality $q> 1$ under the weakest condition.  
Moreover, the simple ``folding'' example in \cite[Example 5.1]{CHMS} shows that our condition is sharp as there are counterexamples for $(1-\frac{1}{q})a< 1$. 
Let us note that \cite{HK} and \cite[Theorem 1.2]{CHMS} contain some result also for $n\geq 3$ but our approach does not improve those.

The main tool for the proof is the following estimate of the measure of the set where the Jacobian can vanish. 
In \cite{HK} it was shown that the set $\{J_f=0\}$ is empty. Our estimate on the dimension of $\{J_f=0\}$ was shown in \cite[Theorem 1.3 and Corollary 1.4]{CHMS} only in the case $q>n$ and in the case 
$\frac{n^2}{2n-1}<q<n$ they needed much stricter assumption. Again we have the result in full generality $q> 1$ and our assumption $(1-\frac{1}{q})a\geq 1$ is sharp even in this case as shown by \cite[Example 5.1]{CHMS}. 


\prt{Theorem}
\begin{proclaim}\label{technical}
Let  $n\geq 2$, $\Omega\subset\er^n$ be a domain, $q> 1$ 
and let $(1-\frac{1}{q})a\geq 1$. 
Assume that $f\in W^{2,q}(\Omega,\er^n)$ is a mapping with $|J_f|^{-a}\in L^1(\Omega)$and let $\pi$ denote the projection of $\rn$ to some hyperplane. Then the precise representative of $Df$ satisfies $\haus^{n-1}\bigl(\pi(\{J_f=0\})\bigr)=0$. 

It follows that either $J_f>0$ a.e. in $\Omega$ or $J_f<0$ a.e. in $\Omega$. 

\end{proclaim}

In the theorem above we use the standard representative $Df(x)=\limsup_{r\to 0+}\oint_{B(x,r)} Df$ and $J_f(x)$ is the determinant of this precise representative.


The main difference with the proof in \cite{CHMS} is the following. Since the Jacobian is essentially a product of $n$ first order derivatives we can estimate its derivative by product rule as  
$$
|DJ_f|\leq C |D^2f| |Df|^{n-1},
$$
Since $f\in W^{2,q}$ we know that for $q>n$ we have $|Df|\in L^{\infty}$ and hence it is possible to show that $J_f\in W^{1,q}$ by the above inequality but for $q<n$ we get only a weaker result (see \cite[Lemma 2.2]{CHMS} for details). In our proof for $q<n$ we know that $|Df|$ is not bounded a.e. but it is in fact bounded on a set which is big enough, so we can use similar tools to \cite{CHMS}-like analogue of Poincar\'e inequality on a big enough set and clever coverings to make the proof work. 

\bigskip
 \section{Proof of the Theorems}

    \begin{lemma}\label{lm-covering}
  Let $\alpha>2$ and let $E\subset \RR^n$ satisfy $0<|E|<\infty$. Then for every $\varepsilon>0$, there exists $\{Q_i\}_i$, a collection of closed cubes in $\RR^n$, such that
	\eqn{aha}
  $$
  \text{diam}(Q_i) <\varepsilon, E\subseteq \bigcup_{i=1}^\infty Q_i \text{ and }\sum_i|Q_i|< 2|E|. 
  $$
	Moreover, for the the set
	$$
  \Gamma:=\bigl\{i\in \NN\ |\ |Q_i|\leq \alpha |Q_i\cap E| \bigr\}\text{ we have }(1-\frac{2}{\alpha}) |E|\leq \sum_{i\in\Gamma} |Q_i|.
   $$
\end{lemma}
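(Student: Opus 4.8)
The plan is to establish the two assertions separately: the covering \eqref{aha} is a standard consequence of the definition of Lebesgue measure, while the bound on $\Gamma$ is a short counting argument.

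\textbf{The covering.} First I would invoke the definition of $n$-dimensional Lebesgue measure as an infimum of total volumes over countable covers by cubes (equivalently, outer regularity). Since $0<|E|<\infty$, taking the tolerance $\delta:=|E|>0$ produces a countable family of closed cubes $\{\widehat Q_j\}_j$ with $E\subseteq\bigcup_j\widehat Q_j$ and $\sum_j|\widehat Q_j|<|E|+\delta=2|E|$. To upgrade this to cubes of diameter less than $\varepsilon$, I would subdivide each $\widehat Q_j$ into finitely many congruent dyadic subcubes of diameter smaller than $\varepsilon$. Subdivision preserves the covering property and leaves the total volume unchanged, so relabelling the resulting countable family as $\{Q_i\}_i$ gives all three conditions in \eqref{aha}.

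\textbf{The good cubes.} For the second part, write $\Gamma^c:=\NN\setminus\Gamma$, so that by definition $|Q_i\cap E|<\frac1\alpha|Q_i|$ for every $i\in\Gamma^c$. Since $\{Q_i\}_i$ covers $E$, countable subadditivity yields $|E|\le\sum_i|Q_i\cap E|$, which I would split according to $\Gamma$ and $\Gamma^c$. On the good indices I use the trivial bound $|Q_i\cap E|\le|Q_i|$, while on the bad indices I insert the defining inequality of $\Gamma^c$ together with $\sum_i|Q_i|<2|E|$ from \eqref{aha}, obtaining
\[
\sum_{i\in\Gamma^c}|Q_i\cap E|\le\frac1\alpha\sum_{i\in\Gamma^c}|Q_i|\le\frac1\alpha\sum_i|Q_i|<\frac2\alpha|E|.
\]
Combining this with $\sum_{i\in\Gamma}|Q_i\cap E|\le\sum_{i\in\Gamma}|Q_i|$ gives $|E|\le\sum_{i\in\Gamma}|Q_i|+\frac2\alpha|E|$, which rearranges to the asserted inequality $(1-\frac2\alpha)|E|\le\sum_{i\in\Gamma}|Q_i|$.

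\textbf{On the difficulty.} There is no serious obstacle: the entire point is that one cannot hide more than a $\frac1\alpha$-fraction of the covering volume inside cubes meeting $E$ sparsely, precisely because the total covering volume is pinned down by $2|E|$. The only steps requiring a little care are arranging the cover to have \emph{both} small diameter and near-minimal total volume (handled by subdividing an almost-optimal cover), and observing that the counting argument uses neither disjointness nor dyadic structure of the $Q_i$, only the covering property and the volume bound \eqref{aha}. The hypothesis $\alpha>2$ plays no role in the argument itself; it merely guarantees that the factor $1-\frac2\alpha$ is positive, so that the conclusion is nonvacuous.
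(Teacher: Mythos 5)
Your proof is correct and takes essentially the same approach as the paper: a cover by small cubes of total volume below $2|E|$ (you via a near-optimal cube cover plus subdivision, the paper via an open superset decomposed into dyadic cubes), followed by the identical counting argument splitting the indices into $\Gamma$ and its complement. The only cosmetic difference is that the paper's disjoint dyadic cubes give the equality $|E|=\sum_i|Q_i\cap E|$, whereas you correctly observe that countable subadditivity $|E|\le\sum_i|Q_i\cap E|$ suffices, so disjointness is never needed.
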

\begin{proof}
We can find an open set $G\subset\rn$ with $E\subset G$ and $|G|< 2|E|$. We can cover this $G$ by disjoint dyadic cubes (boundaries of cubes can intersect) so that we have \eqref{aha}. Simple estimate gives us
$$
\sum_{i\notin \Gamma}|Q_i\cap E|\leq \sum_{i\notin \Gamma}\frac{1}{\alpha}|Q_i|\leq \frac{1}{\alpha}\sum_{i}|Q_i|<\frac{2}{\alpha} |E|. 
$$
It follows that 
$$
|E|= \sum_{i\in \Gamma}|Q_i\cap E|+\sum_{i\notin \Gamma}|Q_i\cap E|\leq \sum_{i\in \Gamma}|Q_i|+\frac{2}{\alpha}|E|. 
$$
and our conlusion follows easily. 
\end{proof}

We need the well-known ACL characterization of Sobolev spaces (see e.g. \cite[Chapter~4.9]{EG} or \cite[Theorem A.15]{HK})

\prt{Theorem}
\begin{proclaim}\label{ACL}
Let $\Omega\subset \rn $ be an open set and let $p\in [1,\infty)$. Then $h\in W^{1,p}(\Omega)$ if and only if it satisfies the ACL condition, i.e. it has a representative that is absolutely continuous on almost all line segments parallel to coordinate axes and moreover, the partial derivatives of these absolutely continuous functions belong to $L^{p}(\Omega)$.   
\end{proclaim}

\begin{lemma}\label{lm-Poinc}
Let $\pi:(x_1,\cdots,x_n)\mapsto (x_1,\cdots,x_{n-1})$ be the standard projection and let $\tilde{E}\subset (-1,1)^{n-1}$ satisfy $0<\haus^{n-1}(\tilde{E})<\infty$. 
Assume that $g:(-1,1)^n\to\er$ satisfies the ACL condition and that $\pi\left(g^{-1}(0)\right)=\tilde{E}$. 
Let $E\subset (-1,1)^n$ be a measurable set so that $\pi(E)=\tilde{E}$ and for each $x'\in \tilde{E}$ we know that $\pi^{-1}(x')\cap E$ is an interval $I_x$ of sidelength $\ell(I)$ and that 
$I_{x'}\cap \{g^{-1}(0)\}\neq 0$.  
Then we have	
$$
\I{E} |g|\, dx 
\leq \ell(I) |E|^{1-\frac{1}{q}} \left(\I{E}|D g|^q\, dx\right)^{\frac{1}{q}}.
$$
\end{lemma}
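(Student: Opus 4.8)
The plan is to reduce the claimed inequality to a one-dimensional Poincar\'e-type estimate along the vertical fibers of $E$ over $\tilde{E}$, and then to integrate and apply H\"older. The essential observation is that, since each fiber $I_{x'}$ contains a zero of $g$, the value of $g$ at any point of the fiber is controlled by the integral of the normal derivative $\partial_n g$ along that fiber.

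First I would invoke the ACL characterization (Theorem~\ref{ACL}) to fix the representative of $g$ that is absolutely continuous on almost every line segment parallel to the $x_n$-axis, with $\partial_n g\in L^q$. Since $\haus^{n-1}(\tilde{E})>0$ coincides with Lebesgue measure on the hyperplane, the exceptional null set of lines meets $\tilde{E}$ only in a set of measure zero; hence for $\haus^{n-1}$-a.e. $x'\in\tilde{E}$ the slice $s\mapsto g(x',s)$ is absolutely continuous. For such an $x'$, choose $t_0\in I_{x'}$ with $g(x',t_0)=0$, which exists by the hypothesis $I_{x'}\cap g^{-1}(0)\neq\emptyset$. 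The fundamental theorem of calculus on the interval $I_{x'}$ then yields, for every $t\in I_{x'}$,
$$
|g(x',t)|=\left|\int_{t_0}^{t}\partial_n g(x',s)\,ds\right|\leq \int_{I_{x'}}|\partial_n g(x',s)|\,ds\leq \int_{I_{x'}}|Dg(x',s)|\,ds,
$$
where the last step uses $|\partial_n g|\leq|Dg|$.

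Next I would integrate this pointwise bound in $t$ over the fiber $I_{x'}$, whose length is $\ell(I)$, to get
$$
\int_{I_{x'}}|g(x',t)|\,dt\leq \ell(I)\int_{I_{x'}}|Dg(x',s)|\,ds.
$$
Integrating over $\tilde{E}$ and applying Fubini's theorem (the slices $I_{x'}$ foliate $E$ over $\tilde{E}$) then gives
$$
\I{E}|g|\,dx\leq \ell(I)\I{E}|Dg|\,dx.
$$
Finally, H\"older's inequality with exponents $q$ and $q/(q-1)$ produces $\I{E}|Dg|\,dx\leq |E|^{1-\frac{1}{q}}\bigl(\I{E}|Dg|^q\,dx\bigr)^{1/q}$, and combining the two displays yields the asserted estimate.

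Once the slicing is in place the argument is routine, so I do not anticipate a serious obstacle. The one point demanding care is the measure-theoretic bookkeeping in the second step: I must ensure that the set of \emph{good} fibers, those on which $g$ is absolutely continuous and along which the prescribed zero is available, has full $\haus^{n-1}$-measure inside $\tilde{E}$; this works precisely because $\tilde{E}$ has positive measure while the ACL exceptional set is null. A secondary subtlety is the compatibility of the zero set $g^{-1}(0)$ with the chosen ACL representative, but since every conclusion is an integral inequality, altering $g$ on a null set does not affect the final bound.
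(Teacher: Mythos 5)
Your proof is correct and takes essentially the same route as the paper's: slice along the vertical fibers, use the zero guaranteed in each $I_{x'}$ together with absolute continuity to bound $|g(x',t)|$ by $\int_{I_{x'}}|Dg|$, then integrate in $t$ and $x'$, and finish with Fubini and H\"older. The measure-theoretic points you flag (the null set of bad fibers and the choice of ACL representative) are handled implicitly in the paper in just the way you describe.
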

\begin{proof}
As $g$ satisfies the ACL condition, it is absolutely continuous on almost all segments and thus $Dg$ is well defined a.e. If $\int_E|Dg|=\infty$, then there is nothing to prove. Hence, for the rest of the proof, we assume that $\int_E|Dg|<\infty$. For each $x'\in \tilde{E}$, consider the vertical line-segment $L(x'):=I_{x'}$. By the ACL condition on $g$, we get, for a.e. $x'\in \tilde{E}$, and for any $t_1,t_2\in I_{x'}$,
 $$
 |g(x',t_1)-g(x',t_2)| \leq \I{L(x')}|D g(x',t)| d\haus^1(t).
 $$
 From the hypothesis $I_{x'}\cap \{g^{-1}(0)\}\neq 0$, we know that we may choose $t_2\in I_{x'}$ such that $g(x',t_2)=0$. We choose such a $t_2$ and integrate over $x'\in \tilde{E}$ and over 
$t_1\in I_{x'}$ to get
 $$
 \I{x'\in \tilde{E}}\I{t_1\in I_{x'}}|g(x',t_1)|d\haus^1(t_1) d\haus^{n-1}(x') 
 \leq \I{x'\in \tilde{E}} \I{t_1\in I_{x'}} \I{I_x'}|D g(x',t)| d\haus^1(t) d\haus^1(t_1) d\haus^{n-1}(x').
 $$ 
 Now, Fubini's theorem, followed by H\"older's inequality gives
$$
\I{x\in E}|g(x)|dx 
\leq \ell(I) |E|^{1-\frac{1}{q}} \left(\I{x\in E} |D g(x)|^q dx\right)^\frac{1}{q}.
$$
\end{proof}

   
    \begin{proof}[\textbf{Proof of Theorem \ref{technical}}]
		This statement was shown in \cite[Theorem 1.3]{CHMS} in the case $q>n$ so we may assume that $q\leq n$ here. The fact that $\haus^{n-1}\bigl(\pi(\{J_f=0\})\bigr)=0$ implies that $J_f$ does not change sign is the same as in \cite[proof of Corollary 1.4]{CHMS}. 
		
         
        Since the result is local, we may, without loss of generality, assume that $\Om=(-1,1)^n$, $\tilde{H}=(-1,1)^{n-1}$, and 
        $$
        \pi:\Om\ni (x',x_n)\mapsto x'\in\tilde{H}.
        $$ 
				    For $t\in (-1,1)$, denote $H_t:=\tilde{H}\times \{t\}$ and $H=H_0$. Since $f\in W^{2,q}(\Om)$, we have $\I{\Om} |D^2 f|^q dx<\infty$. 
				It is not difficult to show using Fubini's theorem and ACL condition (see Theorem \ref{ACL}) that, for almost all $t\in (-1,1)$, $f\in W^{2,q}(H_t\cap \Om)$. Without loss of generality, we may assume that $t=0$ is such a choice. So, we have
				\eqn{finite}
        $$
    f\in W^{2,q}(H)\text{ and }\I{H}|D^2f|^q d\mathcal{H}^{n-1}(x)<\infty.
        $$

				We use the standard representative 
				$$
				Df(x)=\limsup_{r\to 0+}\oint_{B(x,r)} Df
				$$
				for the $Df$ which is Borel measurable (as limsup of continuous functions). It follows that our representative of $J_f(x)$ which is the determinant of this representative is also Borel measurable.   
				It follows that the set $Z:= \{x\in \Om\ |\ J_f(x)=0\}$ is Borel and its projection is measurable (it is even analytic, i.e. continuous image of a Polish space, see \cite{Ke}). 
        We assume for contradiction that 
        	\eqn{eq6} 
        	$$
           \haus^{n-1}(\pi(Z))>0.
            $$
						Now we find a good set $G\subset (-1,1)^{n-1}$ with $\haus^{n-1}(\pi(Z)\cap G)>\tfrac{\haus^{n-1}(\pi(Z))}{2}$ so that $|Df|$ is not too big on $\pi^{-1}(G)$. 						
						
						 Using \eqref{finite} we can find $\lambda_1$ big enough so that for the set 
				\eqn{eq3}
				$$
				F:= \{x'\in (-1,1)^{n-1}\ |\ |Df(x',0)|>\lambda_1 \}\text{ we have }\mathcal{H}^{n-1}\left( F \right)<\frac{\haus^{n-1}(\pi(Z))}{4}.
				$$
			   Since, $f\in W^{2,q}(\Om)$, we can choose $\lambda_2=\lambda_2(\lambda_1)>0$ large enough, so that 
        \eqn{eq4}
        $$
        (\lambda_2-\lambda_1)\frac{\haus^{n-1}(\pi(Z))}{4} > \I{\Om} |D^2f|.
        $$
	Denote 
	\eqn{defG}
	$$
	A:=\{x\in \Om \ |\ |Df|\geq \lambda_2\}\text{ and }G:= (-1,1)^{n-1}\setminus (F\cup \pi (A)). 
	$$
	For any $x'\in \pi(A)\setminus F$ we know that $|Df(x',0)|<\lambda_1$ and we can find $t_1\in(-1,1)$ with 
	$$
  |Df(x',t_1)|>\lambda_2.
  $$
          Now, from the ACL condition (see Theorem \ref{ACL}), for $\haus^{n-1}$-a.e. $x'\in H$, $|Df|$ is absolutely continuous on $L(x'):=\pi^{-1}(x')$ and hence for those $x'\in \pi(A)\setminus F$
        $$
        (\lambda_2-\lambda_1)
		\leq (|Df(y)|-|Df(x)|)
        \leq \I{L(x')} |D^2f(x',t)| d\mathcal{H}^{1}(t).
        $$
        We integrate for $x'\in \pi(A)\setminus F$ and use Fubini's theorem to get 
      $$
	\mathcal{H}^{n-1}   \left(\pi(A)\setminus F\right)(\lambda_2-\lambda_1)
        \leq \I{\pi(A)\setminus F} \I{L(x')}|D^2f(x',t)|d\mathcal{H}^{1}(t)d\mathcal{H}^{n-1}(x')
        \leq \I{\Om} |D^2f(x)|dx.
      $$
        Combining this with \eqref{eq4}, \eqref{eq3} and \eqref{defG}, we get 
        $$
        \mathcal{H}^{n-1}(\pi(A)\setminus F)
        \leq \frac{|\pi(Z)|}{4}\text{ and hence }\mathcal{H}^{n-1}(G\cap \pi(Z))>\frac{\haus^{n-1}(\pi(Z))}{2}. 
        $$

        Let us choose $\varepsilon>0$ arbitrarily. Then, 
        we can get countably many disjoint closed cubes $\{\Q_i\}_{i\in \NN}$ 
				such that
	$$
	\pi(Z)\cap G\subseteq \bigcup_{i=1}^\infty \Q_i\subseteq (-1,1)^{n-1},
	$$
	and  
	\eqn{choosewell}
        $$
        \mathcal{H}^{n-1}(\pi(Z)\cap G)\leq  \sum_{i=1}^\infty \ell(\Q_i)^{n-1}
        \leq 2 \mathcal{H}^{n-1}(\pi(Z)\cap G). 
        $$
				For each $\Q_i$ we now find a measurable set $Q_i\subset (-1,1)^n$ so that $\pi(Q_i)=\Q_i$ (see Fig. \ref{projection}),
				$$ 
				\text{ for each }x'\in \Q_i \quad \pi^{-1}(x')\cap Q_i\text{ is an interval }I_{x'}\text{ of length }
				\ell(\Q_i) 
				$$
				$$
				\text{ and for every }x'\in \Q_i \cap (\pi(Z)\cap G)\text{ we know that }I_{x'}\cap Z\neq \emptyset.
				$$	
 Since $\pi(Z)$ is analytical, the selection theorem \cite[Theorem 18.1]{Ke} allows us, for each $x'\in \pi(Z)$,  to choose $(x',x_n)\in Z$ in a measurable way. This implies that the sets $Q_i$ can be assumed to be measurable.
				
  

	\begin{figure}
\phantom{a}
\vskip 170pt
{\begin{picture}(0.0,0.0) 
     \put(-230.2,0.2){\includegraphics[width=1.00\textwidth]{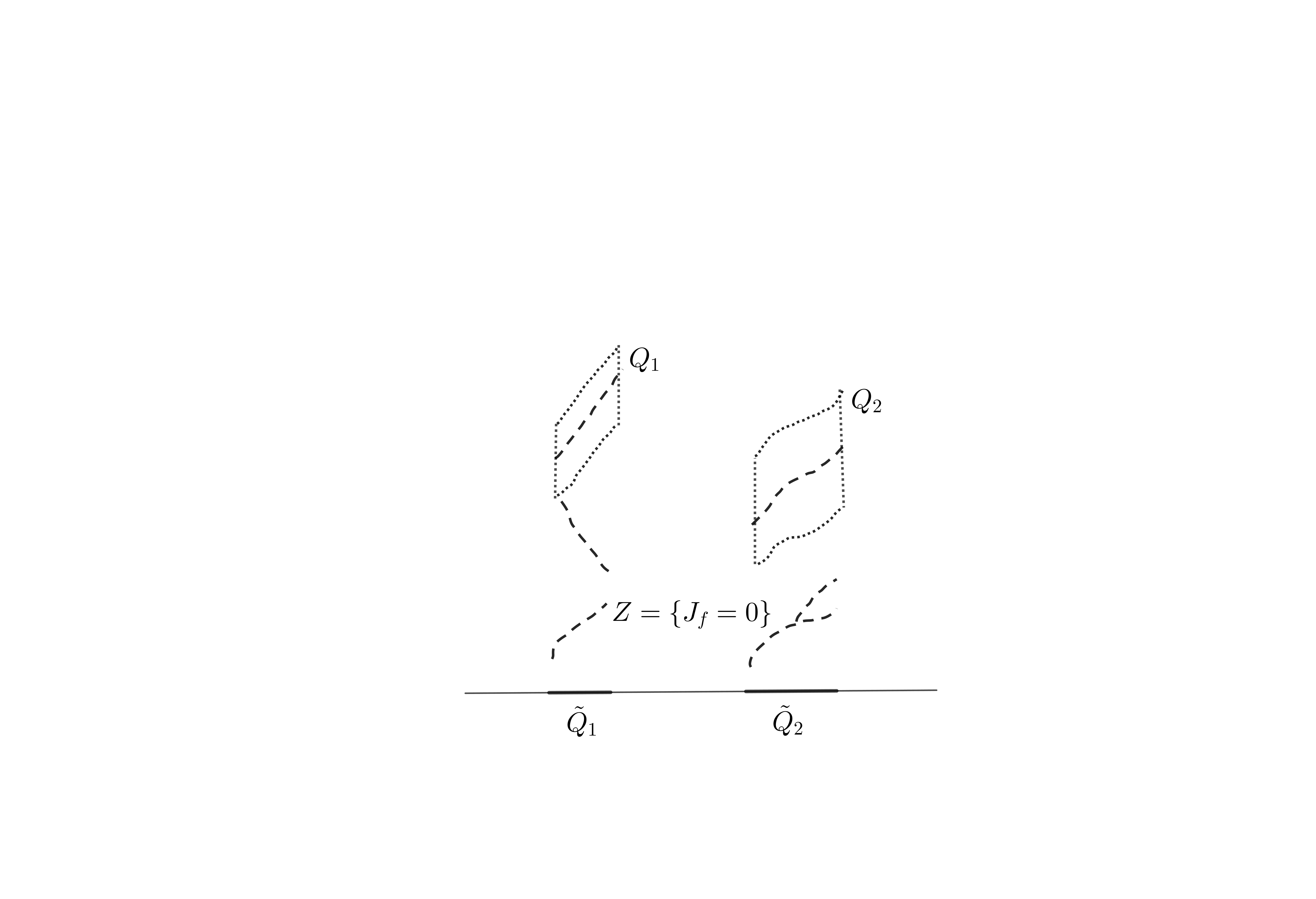}}
  \end{picture}
  }
\vskip -55pt	
\caption{Definition of $Q_i$. Given cubes $\Q_1$ and $\Q_2 $ we find sets $Q_1$ and $Q_2$ (denoted by dotted curves) that project onto $\Q_1$ and $\Q_2$ and intersect $Z=\{J_f=0\}$ (denoted by dashed curves) in each vertical segment if possible.}\label{projection}
\end{figure}

      We may assume that $\{Q_i\}_i$ have a small total measure and thus by the absolute continuity of the integral
               \eqn{abscont}
               $$
       	\sum_{i=1}^\infty \int_{Q_i}|J_f|^{-a}\; dx <\epsilon. 
               $$

	In view of \eqref{choosewell} and Lemma \ref{lm-covering} for $\alpha=3$ and $E=\pi(Z)\cap G$ we get
	\eqn{eq-LB-1}
	$$
	\haus^{n-1}(\pi(Z)\cap G)\leq 3\sum_{i\in \Gamma} \haus^{n-1}(\Q_i),
	$$
	where 
	$$
	\Gamma=\bigl\{i: \haus^{n-1}(\Q_i) \leq 3 \haus^{n-1}(\Q_i\cap \pi(Z)\cap G)\bigr\}.
	$$
	Consider, for $i\in \NN$,
            $$
            E_i:=\pi^{-1}\bigl(\Q_i\cap \pi\left(Z\right)\cap G\bigr)\cap Q_i.
            $$
	    From the definition of $\Gamma$ for 
      \begin{equation}\label{cstar}
      \quad i\in \Gamma\text{ we have } \haus^{n-1}(\Q_i)\leq 3 \haus^{n-1}(\pi(E_i)) \text{ and hence }|Q_i|\leq 3|E_i| .
      \end{equation}

			Since the Jacobian is essentially a product of $n$ first order derivatives we can estimate its derivative by product rule as  
       (see \cite[Chapter~4,9]{EG} for details)
        $$
        |DJ_f(x)| \leq C |D^2f(x)|\ |Df(x)|^{n-1}\text{ for a.e. }x\in \Omega.
        $$
			For any $x\in E_i$ we know that $\pi(x)\in G$ and hence $|Df(x)|\leq \lambda_2$ by \eqref{defG}. 
	This, along with the above inequality implies that
	$$
	|DJ_f(x)| \leq C \lambda_2^{n-1}\ |D^2f(x)|.
	$$ 
	Raising this to the power $q$ and then integrating over $E_i$, we get, for all $i\in \NN$,
        \eqn{mfd}
        $$
        \I{E_i}|DJ_f(x)|^q dx 
        \leq C \lambda_2^{(n-1)q}\I{E_i}|D^2f(x)|^q dx.
        $$
				Since $J_f$ is a product of $n$ derivatives and these derivatives are absolutely continuous by Theorem \ref{ACL} it is not difficult to see that $J_f$ satisfies the ACL condition as well 
				(in fact our precise representative of $Df$ satisfies the ACL and thus our representative of $J_f$ satisfies it as well). 
         For all $i\in \NN$, Lemma \ref{lm-Poinc} implies that
        $$
	\I{E_i} |J_f|\, dx 
	\leq 2\ell(Q_i) |E_i|^{1-\frac{1}{q}} \left(\I{E_i}|DJ_f|^q\, dx\right)^{\frac{1}{q}}.
        $$
        Combining this with \eqref{mfd} and $|E_i|\leq |Q_i|$, we get
        \eqn{poinc}
        $$
        \I{E_i} |J_f|\, dx 
        \leq C \lambda_2^{n-1} \ell(Q_i)^{1+n-\frac{n}{q}} \left(\I{Q_i}|D^2f(x)|^q dx\right)^{\frac{1}{q}}.
        $$
        Using \eqref{cstar} and then choosing $p=1+\frac{1}{a}$, H\"older's inequality and \eqref{poinc} give us 
        \begin{align*}
           |Q_i|\leq C |E_i|
      &\leq \I{E_i} |J_f|^\frac{1}{p} |J_f|^\frac{-1}{p} dx\\
            &\leq \Bigl(\I{E_i} |J_f| dx\Bigr)^\frac{1}{p}\Bigl(\I{E_i} |J_f|^{-a} dx\Bigr)^\frac{1}{1+a}\\
            &\leq C \lambda_2^\frac{n-1}{p} \ell(Q_i)^{\frac{1}{p}+\frac{n}{p}-\frac{n}{pq}}\Bigl(\I{Q_i} |D^2f|^q dx\Bigr)^\frac{1}{pq}\Bigl(\I{Q_i} |J_f|^{-a} dx\Bigr)^\frac{1}{1+a}.
        \end{align*}
        Taking sum over all $i\in \Gamma$ 
        \begin{align}\label{1}
		\sum_{i=1}^\infty \I{Q_i} |J_f|^{-a} dx
            > C \lambda_2^{a(1-n)} \sum_{i\in \Gamma} \frac{ \ell(Q_i)^{a(\frac{n}{q}-n-1)} |Q_i|^{1+a}}{\Bigl(\I{Q_i} |D^2f|^q dx\Bigr)^\frac{a}{q}}.
        \end{align}
        Choose $\alpha=\frac{a+q}{q}$. Then $\alpha'=\frac{a+q}{a}$. Let
        $$
        a_i= \Bigl( \frac{ \ell(Q_i)^{a(\frac{n}{q}-n-1)} |Q_i|^{1+a}}{\Bigl(\I{Q_i} |D^2f|^q dx\Bigr)^\frac{a}{q}} \Bigr)^\frac{1}{\alpha},
        \text{ and }
        b_i= \Bigl(\I{Q_i} |D^2f|^q dx\Bigr)^\frac{a}{q\alpha},
        $$
        and apply the H\"older inequality
        $$
        \Bigl(\sum a_i^{\alpha}\Bigr)^{\frac{1}{\alpha}} 
        \geq \frac{\sum a_i b_i}{\Bigl(\sum b_i^{\alpha'} \Bigr)^{\frac{1}{\alpha'}}} 
        $$
        on \eqref{1}, followed by the fact that $\int_{\Om}|Dg|^q\leq C$, to get
				\eqn{eq9}
				$$
        \begin{aligned}
        \left(\sum_{i=1}^\infty \I{Q_i} |J_f|^{-a} dx\right)^\frac{1}{\alpha}
        &\geq C \lambda_2^\frac{aq(1-n)}{a+q} \frac{\sum_{i\in \Gamma} \left(  \ell(Q_i)^{a(\frac{n}{q}-n-1)} |Q_i|^{1+a} \right)^\frac{1}{\alpha}}{\left(\I{\Om} |D^2f|^q dx \right)^\frac{a}{a+q} }\\
        &\geq C \lambda_2^\frac{aq(1-n)}{a+q} \sum_{i\in \Gamma}  \ell(Q_i)^\frac{an-anq-aq}{a+q} |Q_i|^\frac{q+qa}{a+q}\\        
	    &\geq C  \lambda_2^\frac{aq(1-n)}{a+q} \sum_{i\in \Gamma}  \ell(Q_i)^{\frac{an-anq-aq}{a+q}+n \frac{q+qa}{a+q}} \\
            &\geq C \lambda_2^\frac{aq(1-n)}{a+q} \sum_{i\in \Gamma}  \ell(Q_i)^\frac{an-aq+nq}{a+q}. 
         \end{aligned} 
       $$
        Since $(1-\frac{1}{q})a\geq 1$, we have 
        $$
        n-1 \geq \frac{an-aq+nq}{a+q}.
        $$
        Consequently, we get using \eqref{eq-LB-1}
    \begin{equation}\label{eq8}
        \begin{split}
       	\left(\sum_{i=1}^\infty \I{Q_i} |J_f|^{-a} dx\right)^\frac{1}{\alpha}
        &\geq C(n,a,q)  \lambda_2^\frac{aq(1-n)}{a+q} \sum_{i\in \Gamma}  \ell(Q_i)^{n-1}\\
        &\geq C(n,a,q,Z)  \lambda_2^\frac{aq(1-n)}{a+q} \haus^{n-1}(\pi(Z)\cap G).   
    \end{split}
    \end{equation}    
        Note that $\varepsilon>0$ was chosen arbitrarily and hence the above inequality contradicts \eqref{abscont}.
    \end{proof}

    	
    
            \begin{proof}[\textbf{Proof of Theorem \ref{main}}]
    	From Theorem \ref{technical} we know that either $J_f>0$ a.e. or $J_f<0$ a.e. Without loss of generality we assume that $J_f>0$ a.e.. Note that, since $n=2$, we have $q^*>2$. 
    	Since $f\in W^{2,q}$ implies that $f\in W^{1,q^*}\subset W^{1,2}$ and thus $J_f\in L^1$, we obtain that $f$ is a mapping of finite distortion (see e.g. \cite{HK14} for the definition). Moreover, we obtain that $f$ is also continuous. 
    	
    	To obtain our conclusion it is enough to apply \cite[Theorem 2.5]{CHMS} and hence it is enough to check that  $K_f:=\frac{|Df|^2}{J_f}\in L^1$. 
    	For $q>2$ we obtain that $|Df|\in L^{\infty}$ and the condition $a\geq 1$ implies that $K_f\in L^1$. 
    	In the case $1<q<2$ we use H\"older's inequality 
    	$$
    	\int_{\Om} \frac{|Df|^2}{J_f}\leq \Bigl(\int_{\Om} \frac{1}{J_f^a}\Bigr)^{\frac{1}{a}}\Bigl(\int_{\Om} |Df|^{2\frac{a}{a-1}}\Bigr)^{\frac{a-1}{a}}.
    	$$
    	Clearly $(2-\frac{2}{q})a\geq (1-\frac{1}{q})a\geq 1$ and thus 
    	$(2-\frac{2}{q})a\geq 1$ implies that
    	$$
    	2\frac{a}{a-1}\leq q^*=\frac{2q}{2-q}
    	$$
    	and the last integral is finite by $Df\in L^{q*}$. Similarly for $q=2$ we use H\"older's inequality, $a\geq 2$ and $Df\in L^4$. 
    	
    \end{proof}

\end{document}